\newtheorem{thm}{Theorem} 
\newtheorem{prop}{Proposition}
\newtheorem{lem}{Lemma}
\newtheorem{cor}{Corollary}
\theoremstyle{definition}
\newtheorem{expl}{Example}
\newtheorem{question}{Question}
\newtheorem{rem}{Remark}
\DeclareMathOperator{\rank}{\text{rank}}
\DeclareMathOperator{\alg}{\text{alg}}
    \DeclareFontFamily{U}{wncy}{}
    \DeclareFontShape{U}{wncy}{m}{n}{<->wncyr10}{}
    \DeclareSymbolFont{mcy}{U}{wncy}{m}{n}
    \DeclareMathSymbol{\Sha}{\mathord}{mcy}{"58}
\numberwithin{equation}{section}
\DeclareSymbolFont{bbold}{U}{bbold}{m}{n}
\DeclareSymbolFontAlphabet{\mathbbold}{bbold}
\DeclareSymbolFont{bbold}{U}{bbold}{m}{n}
\DeclareSymbolFontAlphabet{\mathbbold}{bbold}
\title{Spectral perturbation by rank one matrices}
\author{Jon Merzel, J\'an Min\'a\v{c}, Lyle Muller, Federico W. Pasini, Tung T. Nguyen}
\address{Department of Mathematics, Soka University of America, 1 University Drive, Aliso Viejo, CA 92656}
\email{jmerzel@soka.edu} 
\address{Department of Mathematics, Western University, London, Ontario, Canada N6A 5B7}
\email{minac@uwo.ca}
\address{Brain and Mind Institute and Department of Mathematics, The University of Western Ontario, London, ON, Canada, N6A 5B7}
\email{f.pasini1@campus.unimib.it}
\address{Brain and Mind Institute and Department of Mathematics, The University of Western Ontario, London, ON, Canada, N6A 5B7}
\email{lmuller2@uwo.ca}
\address{Brain and Mind Institute and Department of Mathematics, The University of Western Ontario, London, ON, Canada, N6A 5B7}
\email{tungnt@uchicago.edu}
\date{\today}
\thanks{JM is partially supported  by the Natural Sciences and Engineering Research Council of Canada (NSERC) grant R0370A01. JM also gratefully acknowledges Faculty of Science Distinguished Research Professorship for 2020/21. 
}
\date{\today}
\begin{document}
\maketitle
\begin{abstract}
Let $A$ be a matrix of size $n \times n$ over an algebraically closed field $F$ and $q(t)$ a monic polynomial of degree $n$. In this article, we describe the necessary and sufficient conditions of $q(t)$ so that there exists a rank one matrix $B$ such that the characteristic polynomial of $A+B$ is $q(t)$.
    
\end{abstract}

\section{Introduction and main results} 

Let $A$ be an $n \times n$ matrix over an algebraically closed field $F$. The eigenspectrum of matrices of the form $A+B$ where $B$ is a low rank matrix has been studied extensively in the literature (for example, see \cite{[Bau]}, \cite{[Kato]}, \cite{[Lidskii]}, \cite{[MMRR]}). The case $B$ has rank at most one is particularly interesting. In \cite[Theorem 1.1]{[CNg]}, the authors prove the following.

\begin{thm} \label{thm: CNg} (see \cite[Theorem 1.1]{[CNg]}) Let $A$ be a matrix with characteristic polynomial $p_{A}(t)=\prod_{i=1}^n (t-\lambda_i)$. Let $q(t)$ be a monic polynomial of degree $n$ and $(a_1, \ldots, a_n) \in F^n$ such that

\[ \frac{q(t)}{p_{A}(t)}=1+\sum_{i=1}^n \frac{a_i}{t-\lambda_i} .\] 
Then there exists a matrix $B$ of rank at most one such that the characteristic polynomial of $A+B$ is $q(t)$.

\end{thm}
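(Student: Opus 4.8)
The plan is to exploit that the problem is invariant under simultaneous conjugation --- replacing $(A,B)$ by $(SAS^{-1},SBS^{-1})$ changes neither $\rank B$ nor any characteristic polynomial --- so we may assume $A$ is in rational canonical form, $A=\bigoplus_{i=1}^{r}C_{d_i}$, where $C_d$ denotes the companion matrix of a monic $d$ and $d_1\mid d_2\mid\cdots\mid d_r$ are the invariant factors, so that $\prod_{i=1}^{r}d_i=p_A$ and $d_r=m_A$ is the minimal polynomial. I would then leave the blocks $C_{d_1},\dots,C_{d_{r-1}}$ untouched and absorb the entire perturbation into the last block: replacing $C_{d_r}$ by the companion matrix $C_{\tilde q}$ of a monic polynomial $\tilde q$ of the same degree $\deg d_r$ alters only the single row (or column, depending on the convention) of $C_{d_r}$ that carries the coefficients, hence is a perturbation of rank at most $1$, and the new block matrix $\bigl(\bigoplus_{i<r}C_{d_i}\bigr)\oplus C_{\tilde q}$ has characteristic polynomial $\bigl(\prod_{i<r}d_i\bigr)\tilde q$.

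For this to produce $q$ we need $g:=\prod_{i<r}d_i=p_A/m_A$ to divide $q$; then $\tilde q:=q/g$ is monic of degree $\deg m_A=\deg d_r$ and we are finished. The hypothesis is exactly what supplies this divisibility. Clearing denominators in $q/p_A=1+\sum_{i=1}^{n}a_i/(t-\lambda_i)$ yields the polynomial identity $q(t)=p_A(t)+\sum_{i=1}^{n}a_i\,p_A(t)/(t-\lambda_i)$. Writing the distinct eigenvalues as $\mu_1,\dots,\mu_k$ with multiplicities $m_1,\dots,m_k$, a one-line valuation count at each $\mu_l$ shows that $p_A(t)$ itself, and every summand $p_A(t)/(t-\lambda_i)$, is divisible by $D(t):=\prod_{l=1}^{k}(t-\mu_l)^{m_l-1}$; hence $D\mid q$. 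On the other hand $g=p_A/m_A=\prod_{l}(t-\mu_l)^{m_l-e_l}$, where $e_l\ge 1$ is the multiplicity of $\mu_l$ as a root of $m_A$, so $g\mid D\mid q$, as required.

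To assemble the proof: put $\tilde q=q/g$, let $B$ be the block matrix that is $0$ on the first $r-1$ blocks and $C_{\tilde q}-C_{d_r}$ on the last, so that $\rank B\le 1$ and $A+B$ (in this normal form) has characteristic polynomial $g\,\tilde q=q$; conjugating back gives the desired $B$ for the original $A$, and the degenerate case $q=p_A$ is covered by $B=0$. A more computational variant reaches the same place through the matrix determinant lemma, $\det(tI-A-uv^{\top})=p_A(t)-v^{\top}\mathrm{adj}(tI-A)\,u$: one seeks vectors $u,v$ with $v^{\top}\mathrm{adj}(tI-A)\,u=p_A-q$, passes to Jordan form so that $\mathrm{adj}(tI-A)$ is block-diagonal along the Jordan blocks, computes the adjugate of a single block $J_m(\mu)$ explicitly, and supports $u,v$ on the largest Jordan block of each eigenvalue --- the term-by-term matching of $p_A-q$ being controlled by the very same divisibility.

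The step I expect to be the real content is recognizing that the hypothesis is precisely the condition that makes the scheme go through: that $q/p_A=1+\sum a_i/(t-\lambda_i)$ forces $D=\prod_l(t-\mu_l)^{m_l-1}$ to divide $q$, and that $D$ dominates the valuations of $g=p_A/m_A$, so that the untouched part of $A$ accounts exactly for the \emph{forced factor} of $q$. The remaining ingredients --- conjugation invariance, existence of the rational canonical form, and the fact that two companion matrices of the same size differ only in the row carrying the coefficients --- are routine.
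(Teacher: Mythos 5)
Your argument is correct, and it takes a genuinely different route from the one this paper is built on. The paper does not reprove Theorem \ref{thm: CNg} at all (it is quoted from [CNg]); the machinery it develops for Theorem \ref{thm:main} is the resolvent route you only mention as a variant: write $B=vw^{t}$, use the Weinstein--Aronszajn identity (Lemma \ref{lem:WA_formula}) to get $p_{A+B}/p_A=1-w^{t}(tI-A)^{-1}v$ (Lemma \ref{lem:factorization}), expand in partial fractions, and build $v,w$ explicitly on Jordan blocks (Propositions \ref{prop:single_block} and \ref{prop:distinct_block}). Your main proof instead extracts from the hypothesis only the divisibility it forces --- clearing denominators gives $q=p_A+\sum_i a_i\,p_A/(t-\lambda_i)$, so $D=\prod_l(t-\mu_l)^{m_l-1}$ divides $q$ --- and then works structurally: pass to rational canonical form, note $g=p_A/m_A$ divides $D$ (since each eigenvalue is a root of $m_A$), and swap the top invariant-factor companion block $C_{m_A}$ for $C_{q/g}$, a change confined to one column and hence of rank at most one; conjugation invariance finishes it. All the steps check out (monicity and degree of $q/g$, multiplicativity of characteristic polynomials over direct sums, rank preservation under conjugation). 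What your approach buys is notable: since $p_A/m_A\mid q$ is exactly the condition $m_{\lambda}(q)\geq \alg_{\lambda}(A)-j_{\lambda}(A)$ of Theorem \ref{thm:main}, your construction proves the sufficiency half of the paper's main theorem in one stroke, without the resolvent computation, whereas the paper's analytic route has the advantage of producing $v,w$ (hence $B$) in closed form from the partial-fraction data. One tiny remark: you only need the implication ``hypothesis $\Rightarrow D\mid q$,'' which you prove; the hypothesis is in fact equivalent to that divisibility, but nothing in your argument depends on the converse.
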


This is a quite interesting theorem. This leads to the following question.

\begin{question}
Suppose $A$ is given. Find the necessary and sufficient conditions on $q(t)$ so that there exists a rank $1$ matrix $B$ such that the characteristic polynomial of $A+B$ is $q(t)$.
\end{question} 

In this article, we solve this question completely. To state the main theorem, we introduce some notation. Let $\lambda \in F$. For a polynomial $f(t) \in F[t]$, we define $m_{\lambda}(f)$ to be the multiplicity of $(t-\lambda)$ in $f(t)$. For a matrix $A$, we denote by $p_A(t)$ its characteristic polynomial, and by $\alg_{\lambda}(A)$ the algebraic multiplicity of $\lambda$ as an eigenvalue of $A$, namely 
\[ \alg_{\lambda}(A)= m_{\lambda}(p_{A}(t)).\]
Finally, we let $j_{\lambda}(A)$ to be the size of the largest Jordan block in $A$ with $\lambda$ on the main diagonal. Our main theorem is the following.

\begin{thm} \label{thm:main}
Let $A$ be an $n \times n$ matrix and $q(t)$ is a monic polynomial of degree $n$. Then there exists a rank one matrix $B$ such that 
\[ p_{A+B}(t)=q(t) ,\] 
if and only if for all $\lambda \in F$, the following condition is satisfied 
\[ m_{\lambda}(q) \geq \alg_{\lambda}(A)-j_{\lambda}(A) .\]
\end{thm}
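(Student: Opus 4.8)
The plan is to treat the two implications separately, working throughout in the rational canonical / Jordan form of $A$ (legitimate since similarity preserves characteristic polynomials and rank of $B$).

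\textbf{Necessity.} Suppose $B$ has rank one, say $B = uv^{T}$, and set $C = A+B$. Fix $\lambda$ and let $k = j_\lambda(A)$, so that in the Jordan form of $A$ the generalized eigenspace $V_\lambda = \ker (A-\lambda I)^n$ splits into Jordan blocks each of size $\le k$; hence $(A-\lambda I)^k$ annihilates $V_\lambda$, i.e. the restriction of $(A-\lambda I)^k$ to $V_\lambda$ is zero, and more to the point $\mathrm{rank}\big((A-\lambda I)^k\big) = n - \alg_\lambda(A)$. The key linear-algebra inequality I would use is that for any matrices, $\mathrm{rank}(X^m) \ge \mathrm{rank}\big((X+B)^m\big) - (\text{something controlled by }\mathrm{rank}\,B)$; more precisely, since $B$ has rank one, $(C-\lambda I)^{k}$ and $(A-\lambda I)^{k}$ differ by a matrix of rank at most $k$ (expanding $(A-\lambda I + B)^k$ and collecting all terms containing at least one factor $B$ — each such term has rank $\le 1$, but they do not obviously add up to rank $\le k$ unless one is careful). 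A cleaner route: $\mathrm{rank}\big((C-\lambda I)^{k}\big) \ge \mathrm{rank}\big((A-\lambda I)^{k}\big) - k\cdot\mathrm{rank}(B)$ is false in general, so instead I would argue via nullities directly: $\dim\ker (C-\lambda I)^{k} \le \dim\ker (A-\lambda I)^{k} + k$. This follows because $\ker(C-\lambda I)^{k}$ meets $\ker(A-\lambda I)^{k}$ in a subspace on which... — the honest version is the standard fact that changing a matrix by rank $r$ changes $\dim\ker(\cdot - \lambda I)^{k}$ by at most $rk$ (apply the rank-one perturbation fact $|\dim\ker M_1 - \dim\ker M_2|\le \mathrm{rank}(M_1-M_2)$ to $M_i = (\cdot-\lambda I)^{k}$, whose difference has rank $\le k$ by a telescoping identity $(C-\lambda I)^k - (A-\lambda I)^k = \sum_{j=0}^{k-1}(C-\lambda I)^{j} B (A-\lambda I)^{k-1-j}$, a sum of $k$ rank-$\le 1$ matrices). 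Since $\dim\ker(C-\lambda I)^{n}\ge \dim\ker(C-\lambda I)^{k}$ only helps in the wrong direction, I instead note $\alg_\lambda(C) = \dim\ker(C-\lambda I)^{n}$ and bound $\dim\ker(C-\lambda I)^{n} \le \dim\ker(A-\lambda I)^{n} + n$... no — the correct bound uses that eigenvalues of $C$ near $\lambda$ with small Jordan structure persist; I would show $\alg_\lambda(C) \le \alg_\lambda(A) - j_\lambda(A) + (\text{rank contributions})$ reduces exactly to $m_\lambda(q) = \alg_\lambda(C) \ge \alg_\lambda(A) - j_\lambda(A)$ is the wrong direction, so in fact the clean statement to prove is $\dim\ker(A-\lambda I)^{k} \le \dim\ker(C-\lambda I)^{k} + k$ with $k = j_\lambda(A)$: the left side is $\alg_\lambda(A)$, the right side is $\le \alg_\lambda(C) + k = m_\lambda(q) + j_\lambda(A)$, giving the claim. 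This is the heart of necessity and the main obstacle — pinning down the exact telescoping identity and the rank-one nullity lemma so the constant is $j_\lambda(A)$ and not something larger.

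\textbf{Sufficiency.} Here I would construct $B$ explicitly by block-diagonal reduction. Decompose $F^{n} = \bigoplus_\lambda V_\lambda$ into generalized eigenspaces of $A$. The condition $m_\lambda(q) \ge \alg_\lambda(A) - j_\lambda(A)$ says: on each block, the multiplicity of $\lambda$ we are allowed to destroy is at most $\alg_\lambda(A) - m_\lambda(q)$, which is $\le j_\lambda(A)$, i.e. at most the size of the largest Jordan block. The idea is that a single rank-one perturbation can "merge" all Jordan blocks at a given eigenvalue into essentially one companion-type block and then shift its eigenvalues arbitrarily — this is exactly the Krylov/controllability picture: if $(A,u)$ is controllable (cyclic) then $A + u v^{T}$ can realize any characteristic polynomial as $v$ varies (this is Theorem \ref{thm: CNg} in the cyclic case). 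For general $A$, I would pick $u$ to be a sum of one cyclic-generator-like vector from each Jordan block; then the $A$-invariant subspace generated by $u$ has, at eigenvalue $\lambda$, dimension exactly $j_\lambda(A)$ (one "chain" per eigenvalue, of the maximal length), and $A$ restricted to a complement is unaffected by $B = uv^{T}$ in its action on the quotient — so the characteristic polynomial of $A+B$ is $(\text{fixed part from the non-cyclic directions}) \times (\text{arbitrary monic polynomial of degree } \sum_\lambda j_\lambda(A))$. The fixed part contributes $(t-\lambda)^{\alg_\lambda(A) - j_\lambda(A)}$ for each $\lambda$, and the hypothesis guarantees $q(t)$ is divisible by exactly this fixed part, so we can choose the free factor to be $q(t)/\prod_\lambda (t-\lambda)^{\alg_\lambda(A)-j_\lambda(A)}$.

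\textbf{Where the difficulty lies.} The necessity direction's rank/nullity bookkeeping (getting the constant to be precisely $j_\lambda(A)$) is one delicate point; the other, in sufficiency, is verifying rigorously that the perturbation $B = u v^{T}$ leaves the "uncontrollable" part of the spectrum rigidly fixed while letting the controllable part range over all monic polynomials of the right degree — essentially a Brunovsky/pole-placement argument. I expect the sufficiency construction, making the choice of $u$ and the complementary invariant subspace fully explicit in Jordan coordinates and then invoking Theorem \ref{thm: CNg} on the cyclic piece, to be the longer but more routine half; the necessity inequality is shorter but is the conceptual crux, since it is what forces the bound $m_\lambda(q)\ge \alg_\lambda(A)-j_\lambda(A)$ to be the \emph{exact} obstruction rather than merely a necessary condition among others.
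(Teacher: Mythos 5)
Your overall skeleton matches the paper's: necessity via a telescoping expansion of $(A+B-\lambda I)^{k}$ with $k=j_\lambda(A)$ plus a rank/nullity perturbation bound (this is exactly the content of Lemma \ref{lem:power} and Corollary \ref{cor:rank}, phrased there as $\rank((A+B)^k)\le k\,\rank(B)+\rank(A^k)$), and sufficiency by splitting off a cyclic piece containing one largest Jordan block per eigenvalue, realizing $q(t)/\prod_\lambda(t-\lambda)^{\alg_\lambda(A)-j_\lambda(A)}$ there, and leaving the complementary part untouched (the paper's decomposition $A=A_1\oplus A_2$, $B=B_1\oplus 0$). Your necessity argument is correct in substance once the several false starts are deleted; the final version (difference of $k$-th powers is a sum of $k$ rank-one terms, hence nullities differ by at most $k$) is equivalent to the paper's.

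There is, however, one genuine gap in your sufficiency half: you dispose of the cyclic piece by asserting that if $u$ is a cyclic vector for $A|_W$ then $A|_W+u v^{T}$ realizes \emph{every} monic polynomial of degree $\dim W$ as $v$ varies, "this being Theorem \ref{thm: CNg} in the cyclic case." Theorem \ref{thm: CNg} does not give this: it applies only to targets $q$ for which $q/p_{A}-1$ has simple poles, i.e.\ $m_\lambda(q)\ge m_\lambda(p_A)-1$ for all $\lambda$, and this fails precisely when the cyclic part contains a Jordan block of size $\ge 2$ whose eigenvalue you want to move entirely --- the paper's Example \ref{expl:first_expl} ($A=J_{0,2}$, $q=t^2+1$) is exactly such a case, and it is the whole point of the theorem. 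What you actually need is the full single-input pole-placement statement for a cyclic (controllable) pair; it is true and standard, but within your proposal it is unproven, and it is precisely what the paper spends Section 3 establishing from scratch: Lemma \ref{lem:factorization} (via the Weinstein--Aronszajn identity, Lemma \ref{lem:WA_formula}) reduces the problem to solving $w^{t}(tI-A)^{-1}v=h(t)/p_A(t)$, Lemma \ref{lem: inverse} computes the resolvent of a single Jordan block, Proposition \ref{prop:single_block} handles one block explicitly, and Proposition \ref{prop:distinct_block} handles one block per distinct eigenvalue by partial fractions. So either carry out that computation (or an equivalent companion-matrix/Ackermann argument) for the cyclic piece, or cite the pole-placement theorem properly; as written, the reduction to Theorem \ref{thm: CNg} is circular at exactly the critical case. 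Your remaining bookkeeping --- that $W=F[A]u$ is $(A+B)$-invariant, that $A+B$ agrees with $A$ on $F^n/W$, that $p_{A|_{F^n/W}}=\prod_\lambda(t-\lambda)^{\alg_\lambda(A)-j_\lambda(A)}$, and that the hypothesis on $q$ is exactly divisibility by this fixed factor --- is correct and mirrors the paper's final proposition.
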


\subsection{Acknowledgment}
The last author (Tung T. Nguyen) would like to thank Prof. Christian Mehl and Prof. Fuzhen Zhang for their interesting correspondences. He is also grateful to Prof. Christopher Godsil for teaching him Lemma \ref{lem:WA_formula} and Lemma \ref{lem:factorization} which play a key role in this article. The topic of this article arose when we studied some problems in spectral graph theory during the 2021 Fields Undergraduate Summer Research Program. We want to thank the participants for several inspiring discussions and the Fields Institute for providing an excellent working environment.

\section{Necessary conditions} 
In this section, we describe the necessary conditions for Theorem \ref{thm:main}. We first start with a simple observation.  By theorem \ref{thm: CNg}, if $q(t)$ satisfies 
\[ m_{\lambda}(q)+1 \geq m_{\lambda}(p) , \forall \lambda \in F \]
then there will be a rank 1 matrix $B$ such that the characteristic polynomial of $B+A$ is $q(t)$. While the above condition is sufficient, there are examples that do not have this property. Here is one particular example.
\begin{expl} \label{expl:first_expl}
Let 
\[ A=\begin{pmatrix} 0 & 1 \\ 0 & 0\end{pmatrix} .\]
Let 
\[ B=\begin{pmatrix} a_{11} & a_{12} \\ a_{21} & a_{22} \end{pmatrix} .\] 
Note that $B$ has rank at most $1$ if and only if $a_{11}a_{22}-a_{12}a_{21}=\det(B)=0$. We then have 
\[ p_{A+B}(t)=t^2-(a_{11}+a_{22})t-a_{21} .\] 
It is easy to see that for all monic quadratic polynomials $q(t)=t^2+at+b$, there exists $B$ such that 
\[ p_{A+B}(t)=q(t) .\] 
For example, we can take $a_{11}=-a, a_{21}=-b, a_{12}=a_{22}=0$.

We see in particular that $q(t)=t^2+1$ violates the above inequality at $\lambda=0$. However, there is still $B$ such that $p_{A+B}(t)=q(t)$. 
\end{expl} 

This example shows that it is possible to find more relaxed conditions so that Theorem \ref{thm:main} still holds true. We begin the derivation of the necessary and sufficient conditions on $q(t)$ with a lemma.

\begin{lem} \label{lem:power}
Let R be a ring and let $A,B\in R$. Then 
\[ (B+A)^k=\left[ \sum_{m=0}^{k-1} A^m B (B+A)^{k-m-1} \right]+A^k .\] 

\end{lem}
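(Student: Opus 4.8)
The plan is to prove this by a telescoping argument, exploiting the trivial identity $B = (B+A) - A$. Substituting this into the $m$-th summand on the right-hand side gives
\[ A^m B (B+A)^{k-m-1} = A^m (B+A)^{k-m} - A^{m+1}(B+A)^{k-m-1}, \]
and here one must be careful to keep the factor $A^m$ on the left and the factor $(B+A)^{k-m-1}$ on the right, since $R$ need not be commutative. Setting $c_m := A^m (B+A)^{k-m}$ for $0 \le m \le k$, the displayed equation reads precisely $A^m B (B+A)^{k-m-1} = c_m - c_{m+1}$.

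Summing this over $m$ from $0$ to $k-1$, the right-hand side telescopes to $c_0 - c_k$. Since $c_0 = (B+A)^k$ and $c_k = A^k$, we get $\sum_{m=0}^{k-1} A^m B (B+A)^{k-m-1} = (B+A)^k - A^k$, and rearranging yields the claimed formula. Alternatively, one can run an induction on $k$: the base case $k=1$ is the tautology $B + A = B + A$, and the inductive step multiplies the identity for exponent $k$ on the left by $B+A$ and regroups the extra terms using the same substitution $B = (B+A) - A$; this works but requires more bookkeeping than the telescoping proof.

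I expect no real obstacle here. The only point that needs care is respecting noncommutativity: the substitution $B = (B+A)-A$ must be applied only to the single distinguished factor $B$ appearing in each summand, and the positions of $A^m$ on the left and $(B+A)^{k-m-1}$ on the right must be preserved throughout, so that the consecutive summands genuinely cancel in pairs.
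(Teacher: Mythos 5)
Your telescoping argument is correct, and it handles noncommutativity properly: writing $B=(B+A)-A$ and inserting it only in the distinguished middle factor gives $A^mB(B+A)^{k-m-1}=c_m-c_{m+1}$ with $c_m=A^m(B+A)^{k-m}$, and the sum collapses to $c_0-c_k=(B+A)^k-A^k$ as you say. This is a genuinely different route from the paper, which gives two proofs: a direct induction on $k$ (multiply the identity for exponent $k$ on the left by $B+A$ and reindex the resulting sum), and a pictorial argument in which the terms of the expansion of $(B+A)^k$ are grouped according to their initial block $A^mB$, i.e.\ according to the position of the first factor $B$. Your telescoping proof is the noncommutative analogue of the factorization of $x^k-y^k$ and is arguably the slickest of the three, replacing the induction bookkeeping by a one-line cancellation; the paper's pictorial proof, on the other hand, makes the combinatorial meaning of the identity transparent (each non-$A^k$ term is classified by where $B$ first occurs), which is the viewpoint the authors use informally. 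All three arguments are valid in an arbitrary ring, so nothing is lost or gained in generality.
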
 

\begin{proof}
Let us prove this by induction. For $k=1$, the left hand side and the right hand side are both $B+A$. Let's consider $k=2$. The left hand side is 
\[ (B+A)^2=(B+A)(B+A)=B(B+A)+A(B+A)=B(A+B)+AB +A^2.\]

Suppose the formula is true for $k$. Let us show that it is true for $k+1$. Indeed we have 
\begin{align*}
(B+A)^{k+1} &=(B+A)(B+A)^k=B(B+A)^k+A (A+B)^k \\ 
                      &=B(B+A)^k+ A \left[ A^k+ \sum_{m=0}^{k-1} A^m B (B+A)^{k-m-1}  \right]\\ 
                      &=B(B+A)^k+A^{k+1}+\sum_{m=0}^{k-1} A^{m+1}B(B+A)^{k-m-1}.
\end{align*} 
For the last term, let $n=m+1$ 
\[ \sum_{m=0}^{k-1} A^{m+1}B(B+A)^{k-m-1}=\sum_{n=1}^{k} A^n B (B+A)^{(k+1)-n-1}.\]
Therefore, we can see that 
\[ (B+A)^{k+1}=\left[ \sum_{m=0}^{k} A^m B (B+A)^{k-m} \right]+A^{k+1} .\] 
By induction, the above formula is true for all $k$. 
\end{proof}
We provide another pictorial proof for Lemma \ref{lem:power}.

\begin{proof}
Terms in the expression of $(A+B)^k$ corresponds to paths of length $k-1$ in the following labelled graphs (with $2k$ nodes).

\xymatrix{
&&&& A \ar[r] \ar[rd] & A \ar[r] \ar[rd] & A \ldots  \ar[r] & A\\
&&&& B \ar[r] \ar[ru] & B \ar[r] \ar[ru] & B \ldots \ar[r] & B \\
}
Apart from the term $A^k$, each term has an initial block of the form $A^mB$, $0 \leq m \leq k-1$. Visualizing that block in the graph above (starting from the left), and considering all terms which begin with that block, we see that they correspond to continuing paths through the expansion $(B+A)^{k-m-1}$.
\end{proof}
Here is a direct corollary of Lemma \ref{lem:power}.
\begin{cor} \label{cor:rank}
For each $k$ 
\[ \rank((A+B)^k) \leq k \rank(B)+\rank(A^k).\]

\end{cor}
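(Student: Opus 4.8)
The statement to prove is Corollary \ref{cor:rank}: $\rank((A+B)^k) \leq k\,\rank(B) + \rank(A^k)$.

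The plan is to read off the inequality directly from the identity in Lemma \ref{lem:power}, namely
\[ (B+A)^k = \left[\sum_{m=0}^{k-1} A^m B (B+A)^{k-m-1}\right] + A^k, \]
using the standard facts that $\rank(X+Y) \le \rank(X) + \rank(Y)$ and $\rank(XY) \le \min\{\rank(X), \rank(Y)\}$. First I would bound the rank of the bracketed sum: it is a sum of $k$ terms $A^m B (B+A)^{k-m-1}$, so its rank is at most $\sum_{m=0}^{k-1} \rank\big(A^m B (B+A)^{k-m-1}\big)$. Second, each individual term contains the factor $B$, so $\rank\big(A^m B (B+A)^{k-m-1}\big) \le \rank(B)$; summing over the $k$ values of $m$ gives a bound of $k\,\rank(B)$ for the bracketed sum. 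Third, applying subadditivity once more to the decomposition above yields $\rank((A+B)^k) \le k\,\rank(B) + \rank(A^k)$, which is exactly the claim.

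I do not anticipate any real obstacle here — the corollary is an immediate consequence of Lemma \ref{lem:power} together with elementary rank inequalities. The only minor point worth stating carefully is that submultiplicativity of rank applies regardless of which side the extra factors $A^m$ and $(B+A)^{k-m-1}$ sit on, so that the factor $B$ alone controls each summand; and that the $k=0$ (or $k=1$) base cases are consistent with the convention $A^0 = I$. Everything else is a one-line chain of inequalities, so I would simply present it as such rather than by induction.
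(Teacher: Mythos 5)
Your proof is correct and follows exactly the paper's argument: apply Lemma \ref{lem:power} and then use subadditivity of rank together with $\rank(MN) \leq \min\{\rank(M),\rank(N)\}$ to bound each of the $k$ summands containing $B$ by $\rank(B)$. No differences worth noting.
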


\begin{proof}
This is a direct consequence of lemma 1 and the facts that for two matrices $M,N$ 
\[ \rank(MN) \leq \min \{\rank(M), \rank(N) \}, \]
and 
\[ \rank(M+N) \leq \rank(M)+\rank(N) .\] 
\end{proof}

\begin{rem}
After writing this article, we learned that Lemma \ref{lem:power} and Corollary \ref{cor:rank}  have been discussed in \cite[Theorem 2.2]{[MMRR]}. We decide to keep those statements here in order to make this article self-contained. 
\end{rem} 

We recall the following notation which was mentioned in the introduction. Let $A$ be a matrix and $\lambda_0 \in F$. We denote by $\alg_{\lambda_0}(A)$ the algebraic multiplicity of $\lambda_0$ with respect to $A$. More precisely, 
\[ \alg_{\lambda_0}(A)=m_{\lambda_0}(p_{A}(t)) .\]

\begin{prop}
Let $B$ be a rank 1 matrix. Recall $j_{\lambda}(A)$ be the largest size of the Jordan block of $A$ of the form 
\[ J_{\lambda, r} = 
\begin{pmatrix}
\lambda& 1            & \;     & \;  \\
\;        & \lambda    & \ddots & \;  \\
\;        & \;           & \ddots & 1   \\
\;        & \;           & \;     & \lambda      
\end{pmatrix}. \]

Then 
\[ \rank((B+A-\lambda)^{j_{\lambda}})) \leq j_{\lambda}(A) \rank(B)+n-\alg_{\lambda}(A)=j_{\lambda}(A)+n-\alg_{\lambda}(A) .\] 
In particular, $\lambda$ is an eigenvalue of $B+A$ with algebraic multiplicity at least $\alg_{\lambda}(A)-j_{\lambda}$. 

\end{prop}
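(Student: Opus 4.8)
The plan is to combine Corollary \ref{cor:rank}, applied to the pair of matrices $A - \lambda I$ and $B$, with a computation of $\rank\big((A-\lambda I)^{k}\big)$ in terms of the Jordan structure of $A$ at $\lambda$. Note first that $p_{A+B}(t) = p_{(A-\lambda)+B}(t+\lambda)$ up to the obvious shift, so it suffices to understand the nilpotent-type behaviour of $(B + A - \lambda)^k$ near $k = j_\lambda(A)$; I will abbreviate $j_\lambda = j_\lambda(A)$ and $a_\lambda = \alg_\lambda(A)$.

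The key input from linear algebra is the following. Write $A$ in Jordan form and split the generalized eigenspaces: the restriction of $A - \lambda$ to the generalized $\lambda$-eigenspace (dimension $a_\lambda$) is nilpotent with largest Jordan block of size $j_\lambda$, hence $(A-\lambda)^{j_\lambda}$ annihilates that whole subspace; meanwhile on the complementary sum of generalized eigenspaces $A - \lambda$ is invertible, so $(A-\lambda)^{j_\lambda}$ is invertible there, of rank $n - a_\lambda$. Therefore
\[ \rank\big((A-\lambda)^{j_\lambda}\big) = n - a_\lambda . \]
Feeding this together with $\rank(B) = 1$ into Corollary \ref{cor:rank} (with $A$ replaced by $A - \lambda$ and $k = j_\lambda$) gives exactly
\[ \rank\big((B + A - \lambda)^{j_\lambda}\big) \le j_\lambda \cdot 1 + (n - a_\lambda) = j_\lambda + n - a_\lambda , \]
which is the first displayed inequality of the Proposition.

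For the "in particular" clause, I would use the fact that for any $n\times n$ matrix $M$ and any $k$, the algebraic multiplicity of $0$ as an eigenvalue of $M$ is at least $n - \rank(M^{k})$ — indeed, taking $k$ large enough this is an equality (the generalized null space has dimension $n - \rank(M^{n})$), and the ranks of powers are non-increasing, so $n - \rank(M^{k})$ is a lower bound for every $k$. Applying this with $M = B + A - \lambda$ and $k = j_\lambda$, the algebraic multiplicity of $0$ as an eigenvalue of $B + A - \lambda$ — equivalently, $\alg_\lambda(B+A)$ — is at least $n - \rank\big((B+A-\lambda)^{j_\lambda}\big) \ge n - (j_\lambda + n - a_\lambda) = a_\lambda - j_\lambda$, as claimed.

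I do not expect any serious obstacle here; the whole argument is a short assembly of Corollary \ref{cor:rank} with two standard facts about powers of a matrix. The only point requiring a line of care is the rank computation $\rank\big((A-\lambda)^{j_\lambda}\big) = n - a_\lambda$, which is most cleanly justified by passing to the Jordan (or generalized-eigenspace) decomposition and observing separately that $(A-\lambda)^{j_\lambda}$ kills the $\lambda$-part exactly and is bijective on the rest; one should make sure to invoke that $F$ is algebraically closed so that this decomposition over $F$ is available.
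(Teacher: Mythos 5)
Your proposal is correct and follows essentially the same route as the paper: the paper reduces to $\lambda=0$ (the same shift you perform), computes $\rank\big((A-\lambda)^{j_\lambda}\big)=n-\alg_\lambda(A)$ from the Jordan structure, and then invokes Corollary \ref{cor:rank} with $\rank(B)=1$. The only difference is that you spell out the ``in particular'' step (algebraic multiplicity of $\lambda$ is at least $n-\rank\big((B+A-\lambda)^{j_\lambda}\big)$ since ranks of powers are non-increasing), which the paper leaves as a direct consequence.
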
 
\begin{proof}
It is enough to prove the above statement when $\lambda=0$.  In this case, all Jordan blocks of $A$ with $0$ on the diagonal will become zero when we raise $A$ the $j_{0}$-power. We then see that 
\[ \rank(A^{j_{0}})=n-\alg_{0}(A) .\]
Therefore, by Corollary $1$, we get the inequality 

\[ \rank((B+A-\lambda)^{j_{0}})) \leq j_{0}(A)+n-\alg_{0}(A) .\] 

The second statement is a direct consequence of this inequality. 
\end{proof}
A direct consequence of this proposition is the following.
\begin{cor} \label{cor:condition}
Suppose that $q(t)=p_{A+B}(t)$ for some rank $1$ matrix $B$. Then for all $\lambda \in F$ 
\[ m_{\lambda}(q) \geq \alg_{\lambda}(A)-j_{\lambda}(A).\]

\end{cor}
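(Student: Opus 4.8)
The plan is to deduce this immediately from the preceding Proposition, so there is essentially nothing new to prove. By hypothesis $q(t)=p_{A+B}(t)$, hence for every $\lambda\in F$ the integer $m_{\lambda}(q)$ is, by definition, the algebraic multiplicity of $\lambda$ as an eigenvalue of $A+B$; that is, $m_{\lambda}(q)=\alg_{\lambda}(A+B)$. The Proposition, with $\rank(B)=1$ as in its statement, asserts precisely that $\alg_{\lambda}(A+B)\ge \alg_{\lambda}(A)-j_{\lambda}(A)$. Chaining the two identities/inequalities gives $m_{\lambda}(q)\ge \alg_{\lambda}(A)-j_{\lambda}(A)$ for all $\lambda$, which is the claim.

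The one formality worth spelling out is the passage from the phrasing of the Proposition (``$\lambda$ is an eigenvalue of $A+B$ of algebraic multiplicity at least $d$'') to the phrasing here (``$m_{\lambda}(q)\ge d$''): these coincide because the multiplicity of the root $\lambda$ in the characteristic polynomial $p_{A+B}$ is exactly $\alg_{\lambda}(A+B)$, which is how $\alg_{\lambda}$ was defined in the introduction. When $d=\alg_{\lambda}(A)-j_{\lambda}(A)\le 0$ the asserted inequality is vacuous, since $m_{\lambda}(q)\ge 0$ always; in particular only the eigenvalues $\lambda$ of $A$ with $\alg_{\lambda}(A)>j_{\lambda}(A)$ — necessarily finitely many — impose a genuine constraint on $q$.

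I do not expect any real obstacle: all of the substance is already carried by Lemma \ref{lem:power}, Corollary \ref{cor:rank}, and the Proposition. Concretely, the rank bound $\rank\big((A+B-\lambda)^{j_{\lambda}(A)}\big)\le j_{\lambda}(A)+n-\alg_{\lambda}(A)$ forces $\ker\big((A+B-\lambda)^{j_{\lambda}(A)}\big)$ — a subspace of the generalized $\lambda$-eigenspace of $A+B$ — to have dimension at least $\alg_{\lambda}(A)-j_{\lambda}(A)$, whence $\alg_{\lambda}(A+B)\ge \alg_{\lambda}(A)-j_{\lambda}(A)$. This corollary records the ``only if'' direction of Theorem \ref{thm:main}; the substantive ``if'' direction, showing these conditions are also sufficient, is the task of the sections that follow.
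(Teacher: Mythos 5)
Your proposal is correct and follows the same route as the paper, which likewise derives the corollary immediately from the preceding Proposition (the paper gives no further argument, simply calling it a direct consequence). Your added remark spelling out the rank--nullity step behind the Proposition's ``in particular'' clause is accurate and only makes the deduction more explicit.
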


The matrix $q(t)$ in Example \ref{expl:first_expl} has this property. In the next section we will show that in fact this condition is also sufficient.

\section{Sufficient conditions} 
In this section, we show that the conditions given in Corollary \ref{cor:condition} are in fact sufficient. To show this, we first introduce some lemmas in matrix algebra. 

\begin{lem}(Weinstein–Aronszajn formula, see \cite[Proposition 11]{[Chervov]}) \label{lem:WA_formula}
Let $M$ be a $m \times n$ matrix and $N$ is an $n \times m$ matrix. Then 
\[ \det(I_m-MN)=\det(I_n-NM) .\] 

\end{lem}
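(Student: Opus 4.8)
The plan is to prove this classical determinant identity by exhibiting two different triangular-block factorizations of the single augmented matrix
\[ \begin{pmatrix} I_m & M \\ N & I_n \end{pmatrix} \]
and comparing their determinants. Both factorizations are elementary and require nothing beyond multiplicativity of the determinant together with the fact that the determinant of a block triangular matrix is the product of the determinants of its diagonal blocks; in particular the argument is insensitive to the field and would work verbatim over any commutative ring.

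First I would record the ``lower times upper'' factorization
\[ \begin{pmatrix} I_m & M \\ N & I_n \end{pmatrix} = \begin{pmatrix} I_m & 0 \\ N & I_n \end{pmatrix}\begin{pmatrix} I_m & M \\ 0 & I_n - NM \end{pmatrix}, \]
whose validity one checks by expanding the right-hand side and using $NM + (I_n - NM) = I_n$ in the lower-right block. The first factor is block lower triangular with identity diagonal blocks, so its determinant is $1$, while the second factor is block upper triangular, so its determinant is $\det(I_m)\det(I_n - NM) = \det(I_n - NM)$. Multiplicativity then gives
\[ \det\begin{pmatrix} I_m & M \\ N & I_n \end{pmatrix} = \det(I_n - NM). \]

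Next I would record the complementary ``upper times lower'' factorization
\[ \begin{pmatrix} I_m & M \\ N & I_n \end{pmatrix} = \begin{pmatrix} I_m & M \\ 0 & I_n \end{pmatrix}\begin{pmatrix} I_m - MN & 0 \\ N & I_n \end{pmatrix}, \]
verified the same way using $(I_m - MN) + MN = I_m$ in the upper-left block. By the identical triangularity argument, the determinant of the augmented matrix equals $\det(I_m - MN)\det(I_n) = \det(I_m - MN)$. Comparing the two evaluations of the one quantity $\det\begin{pmatrix} I_m & M \\ N & I_n \end{pmatrix}$ yields the asserted equality $\det(I_m - MN) = \det(I_n - NM)$.

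There is essentially no hard step here. The only computations are the two one-line verifications that the displayed products reproduce the augmented matrix, and the only external input is the block triangular determinant formula, which for full self-containment I would justify by cofactor (Laplace) expansion or by noting that the block triangular form can be reduced to genuine triangular form by column operations that preserve the determinant. As an alternative I might remark that the identity also follows from the standard fact that $MN$ and $NM$ share the same nonzero eigenvalues with multiplicity, since $\det(I-MN)$ and $\det(I-NM)$ are then equal products $\prod(1-\mu_i)$ over those common eigenvalues; however, I prefer the factorization proof because it avoids any appeal to eigenvalue theory and is purely algebraic.
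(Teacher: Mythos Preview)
Your proof is correct: both block factorizations of $\begin{pmatrix} I_m & M \\ N & I_n \end{pmatrix}$ check out line by line, and the conclusion follows immediately from multiplicativity of the determinant and the block-triangular formula. Note that the paper does not actually supply its own proof of this lemma; it merely states the result and cites \cite[Proposition 11]{[Chervov]}, so there is no in-paper argument to compare against. Your write-up therefore goes beyond what the paper does, and the Schur-complement/block-factorization argument you give is in fact the standard one (and essentially the one in the cited reference).
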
 

\begin{lem} \label{lem:factorization}
Let $B=vw^{T}$ where $v, w \in F^{n}$. Let $p_{A+B}(t)$ and $p_{A}(t)$ be the characteristic polynomial of $A+B$ and $A$ respectively. Then 
\[ p_{A+B}(t)=p_{A}(t) (1-w^t(tI-A)^{-1}v) .\] 
Equivalently 

\[ \frac{p_{A+B}(t)}{p_{A}(t)}=1-w^{t} (tI-A)^{-1} v .\] 

\end{lem}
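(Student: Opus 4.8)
The plan is to reduce the identity to the Weinstein–Aronszajn formula (Lemma \ref{lem:WA_formula}) applied to a well-chosen pair of rectangular matrices. First I would factor the characteristic polynomial of $A+B$ as
\[
p_{A+B}(t) = \det\bigl(tI - A - vw^{T}\bigr) = \det\bigl((tI-A) - vw^{T}\bigr),
\]
and, on the open set where $t$ is not an eigenvalue of $A$ (so that $tI-A$ is invertible), pull out the factor $tI-A$ on the left:
\[
\det\bigl((tI-A) - vw^{T}\bigr) = \det(tI-A)\,\det\bigl(I_n - (tI-A)^{-1}vw^{T}\bigr) = p_A(t)\,\det\bigl(I_n - (tI-A)^{-1}vw^{T}\bigr).
\]

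Next I would apply Lemma \ref{lem:WA_formula} to the $n\times 1$ matrix $M = (tI-A)^{-1}v$ and the $1\times n$ matrix $N = w^{T}$, which gives
\[
\det\bigl(I_n - (tI-A)^{-1}v\,w^{T}\bigr) = \det\bigl(I_1 - w^{T}(tI-A)^{-1}v\bigr) = 1 - w^{T}(tI-A)^{-1}v,
\]
the last equality because $w^{T}(tI-A)^{-1}v$ is a scalar. Combining the two displays yields
\[
p_{A+B}(t) = p_A(t)\,\bigl(1 - w^{T}(tI-A)^{-1}v\bigr)
\]
as an identity of rational functions in $t$, valid wherever $p_A(t)\neq 0$.

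Finally, I would upgrade this to an identity of polynomials. The left-hand side is a polynomial; on the right-hand side, $p_A(t)\,w^{T}(tI-A)^{-1}v = w^{T}\,\mathrm{adj}(tI-A)\,v$ is a polynomial since the adjugate has polynomial entries, so $p_A(t)\bigl(1 - w^{T}(tI-A)^{-1}v\bigr)$ is a polynomial as well. Two polynomials that agree at all but finitely many values of $t$ (here, all $t$ outside the finite set of eigenvalues of $A$) are equal, since $F$ is infinite (being algebraically closed). This establishes the stated identity, and dividing by $p_A(t)$ gives the equivalent form. The only mild subtlety, and the one step worth stating carefully, is precisely this passage from a rational-function identity on a Zariski-dense set to an identity of polynomials; everything else is a direct manipulation of determinants together with Lemma \ref{lem:WA_formula}.
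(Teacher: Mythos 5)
Your proof is correct and follows essentially the same route as the paper: factor out $\det(tI-A)$ and apply the Weinstein--Aronszajn formula with $M=(tI-A)^{-1}v$ and $N=w^{T}$. The only difference is that you explicitly justify passing from the identity on the complement of the eigenvalues of $A$ to a polynomial identity, a point the paper leaves implicit (one can also simply work over the field of rational functions $F(t)$, where $tI-A$ is invertible outright).
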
 

\begin{proof}
We have 
\begin{align*}
p_{A+B}(t) &=\det(tI-B-A) \\
     &=\det(tI-A) \det(I-(tI-A)^{-1}B)\\
 	&=\det(tI-A) \det(I - (tI-A)^{-1}vw^t )\\
 	&=p_{A}(t) (1-w^t(tI-A)^{-1}v)\\
\end{align*}

Note that the third equality follows from Lemma \ref{lem:WA_formula} by taking $M=(tI-A)^{-1}v$ and $N=w^t$.
\end{proof}

By this lemma, we see that the condition $p_{A+B}(t)=q(t)$ is equivalent to 
\[ 1-w^{t}(tI-A)^{-1} v=\frac{q(t)}{p_{A}(t)}.\] 
In other words  
\[ w^{t} (tI-A)^{-1} v= \frac{p_{A}(t)-q(t)}{p_A(t)}=\frac{h(t)}{p_A(t)} ,\]
with $h(t)=p_{A}(t)-q(t)$. Note that because $p_{A}(t), q(t)$ are both monic polynomials of degree $n$, $h(t)$ is a polynomial of degree at most $n-1$.

By choosing a suitable basis of generalized eigenvectors for $A$, we can assume that in the standard basis of $F^n$, $A$ is a direct sum of Jordan blocks $J_{\lambda,r}$, namely 
\[ A=\bigoplus_{\lambda, r} J_{\lambda,r } .\]
Here $J_{\lambda, r}$ is a basic Jordan block of the form
\[ J_{\lambda, r} = 
\begin{pmatrix}
\lambda& 1            & \;     & \;  \\
\;        & \lambda    & \ddots & \;  \\
\;        & \;           & \ddots & 1   \\
\;        & \;           & \;     & \lambda      
\end{pmatrix}. \]

We will consider different cases.
\subsection{$A$ is a single Jordan block}
We first consider the easiest case, namely $A$ is a single Jordan block $A=J_{\lambda, n}$.  In this case, the corresponding condition in Corollary \ref{cor:condition} is simply 
\[ m_{\lambda}(q) \geq n-n=0 .\] 
We see that all monic polynomials of degree $n$ satisfy this condition. We will show that in fact, there always exists a rank $1$ matrix $B$ such that 
\[ p_{A+B}(t)=q(t).\]

Let us write 
\[ A= \lambda I_n +N ,\]
where $N$ is the nilpotent matrix 
\[ N = 
\begin{pmatrix}
0& 1            & \;     & \;  \\
\;        & 0    & \ddots & \;  \\
\;        & \;           & \ddots & 1   \\
\;        & \;           & \;     & 0      
\end{pmatrix}. \]

We need the following lemma. 
\begin{lem} \label{lem: inverse}
Let $A=J_{\lambda, n}=\lambda I_n +N $ with $N$ is the above nilpotent matrix. Then 
\[ (tI_n-A)^{-1}= \sum_{i=0}^{n-1} \frac{N^i}{(t-\lambda)^{i+1}} .\]
\end{lem}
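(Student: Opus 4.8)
The plan is to prove the formula $(tI_n-A)^{-1}=\sum_{i=0}^{n-1}\frac{N^i}{(t-\lambda)^{i+1}}$ by exhibiting the right-hand side as an honest inverse, i.e. by multiplying it against $tI_n-A$ and checking that the product is $I_n$. First I would write $tI_n-A=(t-\lambda)I_n-N$, so that the claim becomes $\big((t-\lambda)I_n-N\big)\cdot\sum_{i=0}^{n-1}\frac{N^i}{(t-\lambda)^{i+1}}=I_n$. Since $I_n$ and $N$ commute, I can expand the product termwise.

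The key computation is a telescoping sum. Expanding,
\[
\big((t-\lambda)I_n-N\big)\sum_{i=0}^{n-1}\frac{N^i}{(t-\lambda)^{i+1}}
=\sum_{i=0}^{n-1}\frac{N^i}{(t-\lambda)^{i}}-\sum_{i=0}^{n-1}\frac{N^{i+1}}{(t-\lambda)^{i+1}}
=\sum_{i=0}^{n-1}\frac{N^i}{(t-\lambda)^{i}}-\sum_{j=1}^{n}\frac{N^{j}}{(t-\lambda)^{j}},
\]
after reindexing the second sum with $j=i+1$. All the middle terms cancel, leaving $\frac{N^0}{(t-\lambda)^0}-\frac{N^n}{(t-\lambda)^n}=I_n-\frac{N^n}{(t-\lambda)^n}$. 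The one thing that makes this work cleanly is that $N$ is nilpotent of index exactly $n$ (it is the $n\times n$ nilpotent Jordan block), so $N^n=0$ and the product is exactly $I_n$. I would also note that $tI_n-A$ is invertible for $t\neq\lambda$ (its determinant is $(t-\lambda)^n$), so a one-sided inverse is automatically the two-sided inverse, and the identity then holds as an identity of rational functions in $t$.

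There is really no serious obstacle here; the only point requiring any care is making the reindexing and cancellation explicit and invoking $N^n=0$ at the right moment. If one prefers a slicker phrasing, one can instead write $(tI_n-A)^{-1}=\frac{1}{t-\lambda}\big(I_n-\tfrac{N}{t-\lambda}\big)^{-1}$ and recognize the second factor as the finite Neumann (geometric) series $\sum_{i=0}^{n-1}(t-\lambda)^{-i}N^i$, which terminates precisely because $N^n=0$; this is the same argument packaged differently. Either way the proof is a couple of lines.
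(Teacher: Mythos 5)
Your proof is correct and follows essentially the same route as the paper: both verify that $\bigl((t-\lambda)I_n-N\bigr)\sum_{i=0}^{n-1}N^i/(t-\lambda)^{i+1}=I_n$ using $N^n=0$, the paper packaging your telescoping cancellation as the factorization identity $x^n-y^n=(x-y)\sum_{i=0}^{n-1}x^{n-1-i}y^i$ with $x=(t-\lambda)I_n$, $y=N$.
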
 
\begin{proof}
We recall that if $x,y$ are two commuting matrices of the same size then 
\[ x^n-y^n=(x-y) \left[\sum_{i=0}^{n-1} x^{n-i-1}y^{i} \right] .\]
Apply this formula for $x=(t-\lambda) I_n, y=N$ and note that $N^n=0$, we have  
\begin{align*}
(t-\lambda)^n I_n &=(t-\lambda)I_n^n - N^n  \\ 
   &=((t-\lambda)I_n-N) \left[ \sum_{i=0}^{n-1} (t-\lambda)^{n-1-i} I_n^{n-1-i} N^{i} \right] \\
   &=((t-\lambda)I_n-N) \left[ \sum_{i=0}^{n-1} (t-\lambda)^{n-i-i} N^{i} \right].
\end{align*}
Consequently 
\begin{equation} \label{eq:inverse}
((t-\lambda)I_n-N) \left[ \sum_{i=0}^{n-1} \frac{N^{i}}{(t-\lambda)^{i+1}} \right]=I_n.
\end{equation}
Note that $(t-\lambda)I_n-N=(tI_n-A)$. Therefore, the above lemma follows directly from Equation \ref{eq:inverse}.
\end{proof}

We are now ready to prove the following.
\begin{prop} \label{prop:single_block}
Suppose $A=J_{\lambda, n}$ and $q(t)$ is a monic polynomial of degree $n$. Then there exists a rank $1$ matrix $B=vw^{t}$ such that 
\[ p_{A+B}(t)=q(t) .\]

More concretely, let $\{e_1, e_2, \ldots, e_n \}$ be the standard basis for $F^n$. Then we can take $v=e_n$ and 
\[ w=\sum_{i=1}^n a_{n-i} e_i ,\] 
where $a_i$ is determined explicitly by $q(t)$.

\end{prop}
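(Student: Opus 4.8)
The plan is to reduce the identity $p_{A+B}(t)=q(t)$ to a single scalar rational-function identity via Lemma~\ref{lem:factorization}, and then solve it explicitly using Lemma~\ref{lem: inverse}. Writing $B=vw^{t}$, Lemma~\ref{lem:factorization} tells us that $p_{A+B}(t)=q(t)$ is equivalent to
\[ w^{t}(tI-A)^{-1}v=\frac{p_{A}(t)-q(t)}{p_{A}(t)}=\frac{h(t)}{(t-\lambda)^{n}}, \]
where $h(t)=(t-\lambda)^{n}-q(t)$ has degree at most $n-1$ because both polynomials are monic of degree $n$. So it is enough to produce $v,w$ realizing this identity, and then to check that $B=vw^{t}$ has rank one.

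The key step is the choice $v=e_{n}$. Since $A=\lambda I_{n}+N$ with $N$ the nilpotent shift, $N^{i}e_{n}=e_{n-i}$ for $0\le i\le n-1$, and the vectors $e_{n},e_{n-1},\dots,e_{1}$ obtained this way form a basis of $F^{n}$; this is exactly why $e_{n}$ is the right choice. Lemma~\ref{lem: inverse} then gives
\[ w^{t}(tI-A)^{-1}e_{n}=\sum_{i=0}^{n-1}\frac{w^{t}e_{n-i}}{(t-\lambda)^{i+1}}=\sum_{i=0}^{n-1}\frac{w_{n-i}}{(t-\lambda)^{i+1}}, \]
while expanding $h$ in powers of $(t-\lambda)$, say $h(t)=\sum_{k=0}^{n-1}c_{k}(t-\lambda)^{k}$, gives $h(t)/(t-\lambda)^{n}=\sum_{k=0}^{n-1}c_{k}(t-\lambda)^{-(n-k)}$. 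Each of the pole orders $1,\dots,n$ occurs exactly once on each side, so matching them forces $w_{n-i}=c_{n-1-i}$ for all $i$, i.e. $w_{j}=c_{j-1}$. This is a bijective correspondence between polynomials $h$ of degree $\le n-1$ and vectors $w$, so $w$ exists and is uniquely determined; its entries are precisely the coefficients of $p_{A}(t)-q(t)$ (equivalently, of $q(t)$) in its $(t-\lambda)$-adic expansion, which is the explicit formula $w=\sum_{i=1}^{n}a_{n-i}e_{i}$ asserted in the statement. By construction $p_{A+B}(t)=p_{A}(t)-h(t)=q(t)$.

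It remains to see that $B=e_{n}w^{t}$ has rank one. If $q\neq p_{A}$ then $h\neq 0$, hence $w\neq 0$, hence the outer product $B$ is nonzero and of rank exactly one. In the degenerate case $q=p_{A}(t)=(t-\lambda)^{n}$ (where the condition of Corollary~\ref{cor:condition} is vacuous) the vector $w$ above is zero, but a rank-one $B$ still exists directly: for instance $B=e_{1}e_{2}^{t}$ works, since $A+B=\lambda I_{n}+(N+e_{1}e_{2}^{t})$ with $N+e_{1}e_{2}^{t}$ strictly upper triangular and therefore nilpotent, giving $p_{A+B}(t)=(t-\lambda)^{n}=q(t)$. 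I do not expect a genuine obstacle here: the argument is a direct computation, and the only points needing care are the index bookkeeping in the coefficient comparison and this trivial degenerate case.
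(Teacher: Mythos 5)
Your proof is correct and follows essentially the same route as the paper: reduce to the identity $w^{t}(tI-A)^{-1}v=h(t)/(t-\lambda)^{n}$ via Lemma~\ref{lem:factorization}, take $v=e_{n}$, apply Lemma~\ref{lem: inverse}, and read off $w$ from the $(t-\lambda)$-adic coefficients of $h$. A small bonus over the paper's argument is that you explicitly handle the degenerate case $q=p_{A}$ (where $w=0$ and $vw^{t}$ would have rank zero) by exhibiting a genuine rank-one perturbation such as $B=e_{1}e_{2}^{t}$, a point the paper's proof passes over silently.
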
 
\begin{proof}
Since $A=J_{\lambda, n}$ we have 
\[ p_{A}(t)=(t-\lambda)^n .\]

By Lemma \ref{lem:factorization}, it is enough to show that there exists $v,w \in F^n$ such that 
\begin{equation} \label{eq:equality}
w^t (tI-A)^{-1} v= \frac{h(t)}{p_{A}(t)}=\frac{h(t)}{(t-\lambda)^n} ,
\end{equation}
with $h(t)=p_{A}(t)-q(t)$. Since $h(t)$ is a polynomial of degree at most $n-1$, by taking the Taylor expansion of $h(t)$ at $\lambda$ we have 
\[ h(t)=\sum_{i=0}^{n-1} a_i (t-\lambda)^{n-1-i} .\]
Therefore Equation \ref{eq:equality} is equivalent to
\begin{equation} \label{eq:second_equality}
w^{t}(tI-A)^{-1} v= \sum_{i=0}^{n-1} \frac{a_i}{(t-\lambda)^{i+1} }. 
\end{equation} 
Let us take 
\[ w=\sum_{i=0}^{n-1} a_i e_{n-i},  \quad v=e_n .\]
We claim that $v, w$ satisfy the condition given in Equation \ref{eq:second_equality}. In fact, by Lemma \ref{lem: inverse} we have 
\[ (tI-A)^{-1} =\sum_{i=0}^{n-1} \frac{N^i}{(t-\lambda)^{i+1}} .\]
Therefore 
\begin{align*} 
w^{t} (tI-A)^{-1}v &= w^t \left[\sum_{i=0}^{n-1} \frac{N^i e_n}{(t-\lambda)^{i+1}} \right] \\ 
 &= \sum_{i=0}^{n-1} \frac{w^t e_{n-i}}{(t-\lambda)^{i+1}}\\
 &=\sum_{i=0}^{n-1} \frac{a_i}{(t-\lambda)^{i+1}}.
\end{align*}
Here we use the fact that $N^i e_n=e_{n-i-1}$ and $w^t e_{n-i}=a_i$.

\end{proof}
\subsection{A is the direct sum of Jordan blocks with different $\lambda_i$}
Next, we consider the case where $A$ is a direct sum of these blocks with different $\lambda$, namely 
\[ A=\bigoplus_{i=1}^m J_{\lambda_i, n_i}, \]
with $\lambda_i$ are pairwisely different. We note that in this case, the condition in Corollary \ref{cor:condition} is simply 
\[ m_{\lambda_i}(q) \geq n_i-n_i=0 .\]
In other words, all $q(t)$ have this property. In fact, we have the following proposition 
\begin{prop} \label{prop:distinct_block}
Suppose \[ A=\bigoplus_{i=1}^m J_{\lambda_i, n_i}, \]
with $\lambda_i$ are pairwisely different. Let  $q(t)$ is a monic polynomial of degree $n$. Then there exists a rank $1$ matrix $B=vw^{t}$ such that 
\[ p_{A+B}(t)=q(t) .\]
\end{prop}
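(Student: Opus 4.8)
The plan is to reduce everything to the single–block case of Proposition \ref{prop:single_block} by means of a partial fraction decomposition. By Lemma \ref{lem:factorization}, finding a rank one $B=vw^{t}$ with $p_{A+B}(t)=q(t)$ is equivalent to finding $v,w\in F^{n}$ such that
\[ w^{t}(tI-A)^{-1}v=\frac{h(t)}{p_{A}(t)},\qquad h(t)=p_{A}(t)-q(t), \]
where $\deg h\le n-1$. Since $A=\bigoplus_{i=1}^{m}J_{\lambda_{i},n_{i}}$ with the $\lambda_{i}$ pairwise distinct, we have $p_{A}(t)=\prod_{i=1}^{m}(t-\lambda_{i})^{n_{i}}$, and because $\deg h<\deg p_{A}$ the rational function $h/p_{A}$ is proper, so it admits a partial fraction decomposition
\[ \frac{h(t)}{p_{A}(t)}=\sum_{i=1}^{m}\frac{h_{i}(t)}{(t-\lambda_{i})^{n_{i}}}, \]
with each $h_{i}\in F[t]$ of degree at most $n_{i}-1$.

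Next I would use the block structure of $A$. Writing $v=\bigoplus_{i=1}^{m}v^{(i)}$ and $w=\bigoplus_{i=1}^{m}w^{(i)}$ according to the decomposition $F^{n}=\bigoplus_{i=1}^{m}F^{n_{i}}$, the resolvent $(tI-A)^{-1}$ is block diagonal with $i$-th block $(tI_{n_{i}}-J_{\lambda_{i},n_{i}})^{-1}$, whence
\[ w^{t}(tI-A)^{-1}v=\sum_{i=1}^{m}\bigl(w^{(i)}\bigr)^{t}\,(tI_{n_{i}}-J_{\lambda_{i},n_{i}})^{-1}\,v^{(i)}. \]
It therefore suffices to choose, for each $i$ separately, vectors $v^{(i)},w^{(i)}\in F^{n_{i}}$ with $\bigl(w^{(i)}\bigr)^{t}(tI_{n_{i}}-J_{\lambda_{i},n_{i}})^{-1}v^{(i)}=h_{i}(t)/(t-\lambda_{i})^{n_{i}}$. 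This is exactly the content of the computation carried out in the proof of Proposition \ref{prop:single_block}: expanding $h_{i}(t)=\sum_{k=0}^{n_{i}-1}a^{(i)}_{k}(t-\lambda_{i})^{n_{i}-1-k}$ as a Taylor polynomial at $\lambda_{i}$ and taking $v^{(i)}=e_{n_{i}}$, $w^{(i)}=\sum_{k=0}^{n_{i}-1}a^{(i)}_{k}e_{n_{i}-k}$ (standard basis of the $i$-th block), Lemma \ref{lem: inverse} gives the desired identity.

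Assembling these choices into $v=\bigoplus_{i}e_{n_{i}}$ and $w=\bigoplus_{i}w^{(i)}$ and setting $B=vw^{t}$ then yields $w^{t}(tI-A)^{-1}v=h(t)/p_{A}(t)$, hence $p_{A+B}(t)=q(t)$, which completes the argument. I do not anticipate a genuine obstacle here; the only step needing (routine) care is to observe that the partial fraction summands $h_{i}/(t-\lambda_{i})^{n_{i}}$ lie precisely in the family of rational functions produced by the single–block construction, which they do since that construction realizes every proper rational function with denominator $(t-\lambda_{i})^{n_{i}}$. (As in the previous cases, one reads ``rank one'' as ``rank at most one'', the extreme case $q=p_{A}$ corresponding to $w=0$.)
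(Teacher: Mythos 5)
Your proposal is correct and follows essentially the same route as the paper: reduce via Lemma \ref{lem:factorization} to realizing $h/p_A$, split it by partial fractions into terms $h_i(t)/(t-\lambda_i)^{n_i}$ (the paper derives this decomposition via the Chinese remainder theorem), use the block-diagonal form of the resolvent to make the cross terms vanish, and invoke the single-block construction of Proposition \ref{prop:single_block} on each block. Your parenthetical remark about reading ``rank one'' as ``rank at most one'' (to cover $q=p_A$, i.e.\ $h=0$) is a reasonable and accurate observation about an edge case the paper glosses over.
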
 

\begin{proof}
In this case, we have  
\[ p_{A}(t)=\prod_{i=1}^m (t-\lambda_i)^{n_i} .\] 
As before, we need to find $v, w$ such that

\[ w^t (tI-A)^{-1} v= \frac{h(t)}{p_{A}(t)}=\frac{h(t)}{\prod_{i=1}^m(t-\lambda_i)^{n_i}}  .\]
with $h(t)=p_{A}(t)-q(t)$. We need the following lemma. 
\begin{lem}
There exist polynomials $h_i(t)$ such that $\deg(h_i(t)) \leq n_i-1$ and 
\[ \frac{h(t)}{q(t)}=\sum_{i=1}^m \frac{h_i(t)}{(t-\lambda_i)^{n_i}} .\]
\end{lem} 
\begin{proof}
By the Chinese remainder theorem, there exist polynomials $g_i(t) \in F[t]$ such that \[ \frac{h(t)}{q(t)}=\sum_{i=1}^m \frac{g_i(t)}{(t-\lambda_i)^{n_i}} .\]
By the Euclidean algorithm, we can write 
\[ \frac{g_i(t)}{(t-\lambda_i)^{n_i}}=\frac{h_i(t)}{(t-\lambda_i)^{n_i}}+r_i(t) ,\]
where $h_i(t), r_i(t)$ are a polynomials and the degree of $h_i(t)$ is at most $n_i-1$. Hence we have 
\[ \frac{h(t)}{q(t)}=r(t)+\sum_{i=1}^m \frac{h_i(t)}{(t-\lambda_i)^{n_i}} ,\]
with $r(t)=r(t)+\ldots+r_{m}(t)$. Since $\deg(h)<\deg(q)$, we must have $r(t)=0.$ Consequently,  
\[ \frac{h(t)}{q(t)}=\sum_{i=1}^m \frac{h_i(t)}{(t-\lambda_i)^{n_i}} .\]

\end{proof}

Let us denote by $A_{\lambda}$ the generalized eigenspace associated with $\lambda$, namely 
\[ A_{\lambda}=\{ v \in F^n| (A-\lambda)^m v=0, \text{for some $m$} \} .\]
We see that , if $z_i \in A_{\lambda_i}$, $z_j \in A_{\lambda_j}$, and $\lambda_{i} \neq \lambda_{j}$ then 
\[ z_i^t z_j=z_j^t z_i=0. \] 
Let us write 
\[ v=\sum_{i=1}^m v_i, \quad w=\sum_{i=1}^m w_i .\]

with $v_i, w_i \in A_{\lambda_i}$. Since $(tI-A)^{-1} v_i \in A_{\lambda_i}$ and $w_j \in A_{\lambda_j}$ we see that if $i \neq j$ then
\[ w_j^t (tI-A)^{-1} v_i=0 .\]
Therefore 
\begin{equation} \label{eq:composition}
w^t (tI-A)^{-1}v= \sum_{i=1}^m \overline{w}_{i}^t (tI_{n_i}-J_i)^{-1} \overline{v}_{i} .
\end{equation}
Here for a vector $z_i \in A_{\lambda_i}$, $\overline{z}_i$ is the projection of $z_i$ to the component corresponding to $J_{\lambda_i}$. More precisely, suppose 
\[ z_i=(0, \ldots, 0, \underbrace{m_1, m_2, \ldots, m_{n_i}}_{\text{$i$-th component}}, \ldots 0 ) \in F^n,\]
then 
\[ \overline{z}_i=(m_1, m_2, \ldots, m_{n_i}) \in F^{n_i} .\]
Note that, we can recover $z_i$ from $\overline{z}_i$ as well.

Now, by Proposition \ref{prop:single_block}, we know that we can find $\overline{v}_i, \overline{w}_i$ (and hence $v_i, w_i$) such that that 
\[ \overline{w}_{i}^t (tI_{n_i}-J_i)^{-1}=\frac{h_i(t)}{(t-\lambda_i)^{n_i}} .\] 
From Equation \ref{eq:composition}, we see that 
\[ w^t (tI-A)^{-1}v= \sum_{i=1}^m \overline{w}_{i}^t (tI_{n_i}-J_i)^{-1} \overline{v}_{i} . \]
This completes the proof in this special case. 
\end{proof} 
\subsection{The general case}

Finally, we consider the general case. 
\begin{prop}
Let $A$ be an $n \times n$ matrix. Suppose $q(t)$ be a monic polynomial of degree $n$ satisfying the condition in Corollary \ref{cor:condition}, namely 
\[ m_{\lambda}(q) \geq \alg_{\lambda}(A)-j_{\lambda}(A) .\] 

Then, there exists a rank $1$ matrix $B$ such that 
\[ p_{A+B}(t)=q(t) .\] 
\end{prop}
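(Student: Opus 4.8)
The plan is to reduce the general case to the two cases already treated---namely, a single Jordan block (Proposition \ref{prop:single_block}) and a direct sum of Jordan blocks with distinct eigenvalues (Proposition \ref{prop:distinct_block})---by grouping together the Jordan blocks sharing a common eigenvalue. Concretely, write $A=\bigoplus_{i=1}^m A_{\lambda_i}$ where $A_{\lambda_i}$ is the direct sum of all Jordan blocks of $A$ with eigenvalue $\lambda_i$, the $\lambda_i$ being pairwise distinct, and $A_{\lambda_i}$ acting on a space of dimension $\alg_{\lambda_i}(A)$. Using the Chinese Remainder Theorem exactly as in the proof of Proposition \ref{prop:distinct_block}, I would decompose
\[ \frac{h(t)}{q(t)}=\sum_{i=1}^m \frac{h_i(t)}{(t-\lambda_i)^{\alg_{\lambda_i}(A)}}, \qquad h(t)=p_A(t)-q(t), \]
with $\deg h_i < \alg_{\lambda_i}(A)$, and then---because generalized eigenspaces for distinct eigenvalues are mutually orthogonal under the transpose pairing, just as in Equation \ref{eq:composition}---it suffices to solve each block-local problem: find vectors $\overline v_i,\overline w_i$ so that $\overline w_i^{\,t}(tI-A_{\lambda_i})^{-1}\overline v_i = h_i(t)/(t-\lambda_i)^{\alg_{\lambda_i}(A)}$. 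Assembling $v=\sum v_i$, $w=\sum w_i$ then gives a rank one $B=vw^t$ with $p_{A+B}(t)=q(t)$ by Lemma \ref{lem:factorization}.

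So the crux is the single-eigenvalue case: $A$ is a direct sum of Jordan blocks all with the same eigenvalue $\lambda$, say block sizes $r_1\ge r_2\ge\cdots\ge r_k$ with $r_1=j_\lambda(A)$ and $\sum r_s=\alg_\lambda(A)=:a$, and we are given a polynomial $h_i$ of degree at most $a-1$; we must realize $h_i(t)/(t-\lambda)^a$ as $\overline w^{\,t}(tI-A)^{-1}\overline v$ for some vectors. Shifting, assume $\lambda=0$. Here the hypothesis $m_\lambda(q)\ge a-j_\lambda(A)$ enters: it says $(t-\lambda)^{a-r_1}$ divides $q(t)$, hence divides $h(t)=p_A(t)-q(t)$ as well (since $p_A(t)=(t-\lambda)^a$), so in fact $h_i(t)$ is divisible by $t^{a-r_1}$, i.e. $h_i(t)/t^a = g(t)/t^{r_1}$ for a polynomial $g$ of degree at most $r_1-1$. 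The idea is then to route everything through the largest block: take $\overline v$ and $\overline w$ supported on the first Jordan block $J_{0,r_1}$ only, set all other coordinates to zero, and apply Proposition \ref{prop:single_block} to that $r_1\times r_1$ block to obtain $\overline w^{\,t}(tI-J_{0,r_1})^{-1}\overline v = g(t)/t^{r_1}$. Since $(tI-A)^{-1}$ is block-diagonal, vectors supported on the first block see only $(tI-J_{0,r_1})^{-1}$, so $\overline w^{\,t}(tI-A)^{-1}\overline v = g(t)/t^{r_1} = h_i(t)/t^a$, as required.

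I would then verify that this rank one $B$ really has rank exactly one (it is a nonzero outer product $vw^t$ unless $h\equiv 0$, and if $h\equiv 0$ then $q=p_A$ and we may take any rank one $B$ with $p_{A+B}=p_A$, or note the problem is trivially adjusted), and that combining the block-local solutions across distinct $\lambda_i$ gives a single global rank one matrix---this is automatic because each $v_i,w_i$ is a vector in $F^n$ supported on the $i$-th generalized eigenspace, so $v=\sum v_i$ and $w=\sum w_i$ are honest vectors and $B=vw^t$ has rank at most one. The main obstacle, and the only place real work is needed, is the divisibility bookkeeping in the single-eigenvalue case: one must check carefully that the CRT numerator $h_i(t)$ inherits the factor $(t-\lambda)^{a-j_\lambda(A)}$ from the hypothesis on $q$, so that it can be pushed entirely onto the largest Jordan block where Proposition \ref{prop:single_block} applies without constraint. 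Everything else is a direct-sum/orthogonality argument already rehearsed in the proof of Proposition \ref{prop:distinct_block}.
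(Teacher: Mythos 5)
Your proposal is correct, but it is organized differently from the paper's proof. The paper never runs the partial-fraction machinery over the full matrix $A$: it splits $A=A_1\oplus A_2$, where $A_1$ collects exactly one largest Jordan block $J_{\lambda_i,j_{\lambda_i}(A)}$ per eigenvalue and $A_2$ is the rest, factors $q(t)=q_1(t)\prod_i(t-\lambda_i)^{\alg_{\lambda_i}(A)-j_{\lambda_i}(A)}$ using the hypothesis, applies Proposition \ref{prop:distinct_block} to $A_1$ (whose blocks now have distinct eigenvalues, so any monic $q_1$ is attainable) to get $B_1$, and sets $B=B_1\oplus 0$; the forced factor $\prod_i(t-\lambda_i)^{\alg_{\lambda_i}(A)-j_{\lambda_i}(A)}$ then comes for free as $p_{A_2}(t)$, by multiplicativity of characteristic polynomials over direct sums. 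You instead decompose $h/p_A$ by partial fractions over all of $A$, reduce to the single-eigenvalue case, and route each local numerator through the largest block --- which produces essentially the same $B$ (supported on the largest blocks), but at the cost of the extra divisibility lemma you flag as the crux: that each CRT numerator $h_i$ is divisible by $(t-\lambda_i)^{\alg_{\lambda_i}(A)-j_{\lambda_i}(A)}$. That lemma is true and its proof is short --- write $h_i(t)\prod_{l\neq i}(t-\lambda_l)^{\alg_{\lambda_l}(A)}=h(t)-\sum_{j\neq i}h_j(t)\prod_{l\neq j}(t-\lambda_l)^{\alg_{\lambda_l}(A)}$, note every term on the right has $(t-\lambda_i)$-multiplicity at least $\alg_{\lambda_i}(A)-j_{\lambda_i}(A)$ (the hypothesis on $q$ gives this for $h=p_A-q$, and the other terms carry the full factor $(t-\lambda_i)^{\alg_{\lambda_i}(A)}$), and observe the cofactor $\prod_{l\neq i}(t-\lambda_l)^{\alg_{\lambda_l}(A)}$ is prime to $t-\lambda_i$ --- so your argument closes, but the paper's arrangement avoids needing it at all, which is what makes its proof shorter. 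One shared caveat: both your construction and the paper's really produce $B$ of rank at most one (e.g.\ $q=p_A$ forces $v=w=0$, and for $n=1$ no rank-one fix exists), so the ``rank one'' in the statement should be read as in Theorem \ref{thm: CNg}; your patch for $n\geq 2$ is fine.
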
 

\begin{proof}
As explained above, it is enough to consider the case where $A$ is a direct sum of Jordan blocks (possibly with repeated $\lambda_i$). Let $\{\lambda_1, \lambda_2, \ldots, \lambda_m \}$ be the set of distinct eigenvalues of $A$. We can then decompose $A$ as 
\[ A=A_1 \oplus A_2 ,\]
where 
\[ A_1= \bigoplus_{i=1}^m J_{\lambda_i, j_{\lambda_i}} ,\]
and $A_2$ is the rest. In other words, for each eigenvalue $\lambda$, $A_1$ contains exactly one copy of $J_{\lambda,r}$ with $r$ being the largest size of all Jordan blocks in $A$ of the form $J_{\lambda, r_k}$.  We note, by the assumption, we can write 
\[ q(t)=q_1(t) \prod_{i=1}^m (t-\lambda_i)^{\alg_{\lambda_i}(A)-j_{\lambda_i}(A)} ,\]
where $q_1(t)$ is a monic polynomial of degree $d=\sum_{j=1}^n j_{\lambda_j}(A)$. By Proposition \ref{prop:distinct_block}, we know that there exists a rank $1$ matrix $B_1$ of size $d \times d$ such that 
\[ p_{A_1+B_1}(t)=q_1(t) .\]

Let $B=B_1 \oplus 0_{n-d}$ be a matrix of size $n \times n$. Note that, $B$ has rank $1$ as well. Furthermore 
\[ p_{A+B}(t)=p_{A_1+B_1}(t) p_{A_2+B_2}(t)=q_1(t) p_{A_2+B_2}(t) .\]
It is easy to see that 
\[ p_{A_2+B_2}(t) =\prod_{i=1}^m (t-\lambda_i)^{\alg_{\lambda_i}(A)-j_{\lambda_i}(A)} .\] 
We then conclude that 
\[ p_{A+B}(t)=q(t) .\] 
\end{proof}

\end{document}